\theoremstyle{plain}
\newtheorem{theorem}{Theorem}[section]
\newtheorem*{theorem*}{Theorem}
\newtheorem{corollary}[theorem]{Corollary}
\newtheorem{proposition}[theorem]{Proposition}
\numberwithin{equation}{section}
\theoremstyle{remark}
\newtheorem{definition}[theorem]{Definition}
\def \N{\mathbb N} 
\def\bi{{\bf i}}    \def\bj{{\bf j}}   \def\bk{{\bf k}}
\def\wtilde{\widetilde}
\def \N{\mathbb N}
\newcommand{\coloneqq}{\mathrel{\mathop:}=}
\title[Application of quasideterminants]%
{Application of quasideterminants to the inverse of block triangular matrices over noncommutative rings}
\date{\today}
\subjclass[2010]{15A09, 15B33,  15A15, 47A05, 47A55, 65F05}
\author[X. Zhan]{Xuzhou Zhan}
\address[X. Zhan]{School of Mathematical Sciences, South China Normal University,
    Guangzhou 510631, China}
\email{xzzhan@m.scnu.edu.cn}
\begin{document}

\begin{abstract}
Given a block triangular matrix $M$ over a noncommutative ring with invertible diagonal blocks, this work gives two new   representations
 of its inverse $M^{-1}$. Each block element of $M^{-1}$ is explicitly expressed  via a quasideterminant of a  submatrix of $M$ with the  block Hessenberg type.
Accordingly another representation for each inverse block is attained, which is in terms of recurrence relationship with multiple terms among blocks of $M^{-1}$.
The latter result allows us to perform an off-diagonal rectangular perturbation analysis for the inverse calculation of $M$. An example is given to illustrate the effectiveness of our results.

  \noindent\textbf{Keywords:}  Quasideterminant,  Inverse, Hessenberg matrix, Noncommutative ring, Perturbation, Quaternion  
\end{abstract}

\maketitle

\section{Introduction}

As is pointed out in  \cite{GGRW},  quasideterminants play an important role in noncommutative algebra as  determinants do in commutative algebra.
These extended noncommutative determinants,
which are initiated in a series of literature by  Gelfand et al. \cite{GR,GGRW,GelRF}, have various applications in branches of noncommutative mathematics such as  noncommutative symmetric functions \cite{GKLLRT}, noncommutative integrable systems \cite{RS, EGR,EGR1}, quantum algebras  and Yangians \cite{GR,GR1,GR2,KL,Mol,Mol1,MolR}.

This work pays attention to a typical type of quasideterminants of block Hessenberg matrices  over noncommutative rings, which are hereinafter called Hessenberg quasideterminants. Compared with general quasideterminants, these special quasideterminants have explicit expressions  and, moreover, allow us to dig the value of quasideterminants in the matrix inverse problems.

For  the study of the inverse problems of matrices or typically triangular matrices with noncommutative entries  the reader is referred to \cite{CRV}, \cite{GGRW},  \cite{Mary},  \cite{MuCr}, \cite{CY}, \cite{LHL}, etc. Suppose that ${M}$ is a $m\times m$ block triangular matrix   over a noncommutative ring $ \mathcal R$ with the $n$-by-$n$ block decomposition
 \begin{equation}\label{UpperMatrix}
 {M}=\begin{bmatrix}
 M_{11} & \cdots & M_{1n}\\
  & \ddots & \vdots\\
  & & M_{nn}
 \end{bmatrix},
 \end{equation}
where each element $M_{jk}$ is a ${s_j\times s_k}$ matrix  ($m=\sum_{j=1}^{n}s_j$). 
For the simple case when $n=2$, if $M_{11}$ and $M_{22}$ are invertible in the rings of $s_1\times s_1$ matrices and $s_2\times s_2$ matrices over $\mathcal R$  respectively, then $M$ is invertible in the ring of $m\times m$ matrices over $\mathcal R$ and 
$
{M}^{-1}=\begin{bmatrix}
 M^{-1}_{11} &  -M_{11}^{-1}M_{12}M^{-1}_{22}\\
   & M^{-1}_{22}
 \end{bmatrix}.
$ 
 For the general case, decompose $M$ into the  $2$-by-$2$ block form
$$
M=\left[\begin{array}{c|c}
M_{11} & M_{(12)} \\ \hline
0 & M_{(22)}
\end{array}\right].
$$
 The invertibility of $M$ can be fulfilled through that of $M_{11}$ and another block  matrix $M_{(22)}$ of smaller size. 
 Via iteration on $M_{(22)}$ and its block submatrices, it is clear that all  block triangular matrices $M$ with invertible diagonnal element $M_{kk}$ are invertible as well 
  (In fact, for the particular case when each $s_k$ equals $1$ and $\mathcal R$ is a Dedekind-finite ring, these matrices exactly  make up of the unit group of the ring of all  triangular matrices over $\mathcal R$) and, in this case, the inverse $M^{-1}$ can be iteratively calculated. 
 
 This iterative algorithm is a direct and convenient way to calculate the inverse $M^{-1}$ in some sense, whereas it does not provide an explicit and simple expression to each single block or element of   $M^{-1}$. To give an improvement,
this work is devoted to  use Hessenberg quasideterminants as a new representing tool.  
Some preliminaries for these special quasideterminants are given in 
 the following auxiliary  section. By decomposing $M^{-1}$ into the analogous $n$-by-$n$ block form as $M$, we  express 
each block explicitly  in Section \ref{Sec:Inverse} via a Hessenberg quasideterminant of a  submatrix of $M$ with  block Hessenberg type.
Accordingly,  through the recurrence relation with multiple terms among the $n^2$ blocks of $M^{-1}$, another elegant expression for
each single block can be fulfilled.
As a consequence of the latter representation, the corresponding behaviour  is described for each single block of  $M^{-1}$   when an off-diagonal rectangular pertubation is imposed on $M$.  An example is given in Section \ref{Sec:Example}  to illustrate the effectiveness of our results. 

\section{Hessenberg quasideterminants}
Throughout the paper, let $\N$ denote the set of all positive integers. Generally we may always assume that $m,n\in \N$
and $\mathcal R$ is a noncommutative ring with the additive identity $0_{\mathcal R}$  and the multiplicative identity $I_{\mathcal R}$. Let~$\mathcal R^{m\times n}$ stand for the set
of all $m\times n$ matrices over $\mathcal R$.
Suppose that $M:=[M_{jk}]_{j,k=1}^n\in \mathcal R^{m\times m}$, where each block element $M_{jk}\in \mathcal R^{s_j\times s_k}$ ($m=\sum_{j=1}^{n}s_j$). For $j,k=1,\ldots,n$, we use ${\rm row}_j ({M})$ and ${\rm col}_k ({M})$ to signify the $j$-th block row vector and the $k$-th block column vector of ${M}$, i.e.
\begin{align*}
&{\rm row}_j ({M}):=(M_{j1},\ldots, M_{jn}),\\
&{\rm col}_k ({M}):=\begin{bmatrix}
M_{1k}\\
 \vdots\\
 M_{nk}
\end{bmatrix}.
\end{align*}
We denote by ${M}_{(j;k)}$ the $(n-1)\times (n-1)$ block submatrix of ${M}$ that removed by ${\rm row}_j ({M})$ and ${\rm col}_k ({M})$.

We begin with an introduction of quasideterminants.  
\begin{definition}
Let ${M}$ be an  $m\times m$ matrix over $\mathcal R$  with a $2$-by-$2$ block decomposition
$$
M=:\left[\begin{array}{c|c}
M_{(11)} & M_{(12)} \\ \hline
M_{(21)} & M_{(22)}
\end{array}\right].
$$
If $M_{(21)}$ is invertible, then the {\it quasideterminant} of $M$ {\it with block index} $(1,2)$, is the following expression
\begin{equation*}
\left| M \right|_{(12)}=M_{(12)}-M_{(11)}M_{(21)}^{-1} M_{(22)}.
\end{equation*}
\end{definition}
Motivated by the idea in \cite[Definition 3.1, P. 12]{Olv},  our definition is slightly  different from the Gelfand-Retakh formulation (see e.g.  \cite[Section 1]{GR}). Compared with the latter, 
all blocks  $M_{(jk)}$
of the matrix $M$ do not necessarily have the same size which belong to the same noncommutative ring, and hence the multiplication of blocks can only be allowed if their sizes are ``compatible''.

The next typical  quasideterminant is indebted to the above definition, which is  defined for a $m\times m$  block upper Hessenberg matrix ${H}$ over $\mathcal R$ with the $n$-by-$n$ block decomposition
\begin{equation}\label{HenssenbergMatrix}
{H}:=\left[\begin{array}{ccc|c}
M_{01} &  \cdots &  M_{0,n-1} & M_{0n}\\ \hline
M_{11} &  \cdots & M_{1,n-1} & M_{1n}\\
 & \ddots & \vdots &  \vdots\\
 &   & M_{n-1,n-1}
 & M_{n-1,n}
\end{array}\right],
\end{equation}
where each element $M_{jk}\in \mathcal R^{s_j\times s_k}$ ($m=\sum_{j=0}^{n-1} s_j=\sum_{j=1}^{n} s_j$). In order for the quasideterminant of ${H}$ to be well-defined, we additionally require that each $M_{jj}$ is invertible.

\begin{definition}\label{DefQuasi}
Let ${H}$ be an $m\times m$  block upper Hessenberg matrix  with the block decomposition \eqref{HenssenbergMatrix}, where each block $M_{jk}\in \mathcal R^{s_j\times s_k}$ and each $M_{jj}$ is invertible ($m=\sum_{j=0}^{n-1} s_j=\sum_{j=1}^{n} s_j$).
Then the \emph{Hessenberg quasideterminant of} $H$, denoted by $\left| H \right|$, is equal to 
\begin{equation*}
\left| H \right|=M_{0n}- [M_{01}, \cdots, M_{0,n-1} ]  \begin{bmatrix}
 M_{11} & \cdots & M_{1,n-1}\\
  & \ddots  & \vdots\\
 & & M_{n-1,n-1}
 \end{bmatrix}^{-1}
\begin{bmatrix}
M_{1n}\\
\vdots\\
M_{n-1,n}
\end{bmatrix},
\end{equation*}

\end{definition}

Given  a block Hessenberg  matrix ${H}$ as in \eqref{HenssenbergMatrix}, one can associate another block Hessenberg matrix~${H}_{\diamond}$ as
  \begin{equation*}
{H}_{\diamond}:=\mbox{diag} (I_{\mathcal R^{s_0\times s_0}}, M^{-1}_{11}, M^{-1}_{22},\cdots, M^{-1}_{n-1,n-1})\cdot {H}.
\end{equation*}
Analogous to \cite[Subsection 1.2, p. 93]{GR}, we have $\left|H\right|=\left|{H}_{\diamond}\right|$.
It follows from \cite[Proposition 1.2.9]{GGRW} that
\begin{equation}\label{QuasiRepreElement}
\left|H\right|=\left|{H}_{\diamond}\right|=M_{0n}+
\sum_{1\leq j_1<\cdots <j_k <n}(-1)^{k}  M_{0j_1} M_{j_1,j_1}^{-1}  M_{j_1,j_2}\cdots M_{j_k,j_k}^{-1} M_{j_k,n}.
\end{equation}
Accordingly, $\left|H\right|$ can be factorized  into two ``smaller size" Hessenberg quasideterminants as
\begin{align}
&\left|H \right|=M_{0n}+ \sum_{1< j_1<\cdots <j_k <n}(-1)^{k}  M_{0j_1}   M_{j_1,j_2}\cdots M_{j_k,n}
 \nonumber \\
&- M_{01}\cdot  \sum_{1< j_2<\cdots <j_k <n}(-1)^{k-1}  M_{1,j_2}\cdots  M_{j_k,n} \nonumber \\
&=\left|H_{(2;1)}\right|-M_{01}\cdot \left|{H}_{(1;1)}\right|. \label{HH1112}
\end{align}

The following result is a more general factorization for  Hessenberg quasideterminants than \eqref{HH1112}. Suppose that $M:=[M_{jk}]_{j,k=1}^n\in \mathcal R^{m\times m}$, where each block element $M_{jk}\in \mathcal R^{s_j\times s_k}$ ($m=\sum_{j=1}^{n}s_j$).
For $v,w,j,k=1,\ldots,n$ such that $1\leq v+j-1, w+k-1\leq n$, ${M}^{(v,w)}_{[j,k]}$ represents such a block submatrix of ${M}$ as
$$
{M}^{(v,w)}_{[j,k]}:=\begin{bmatrix}
M_{v,w} & \cdots &  M_{v,w+k-1}\\
\vdots & \ddots & \vdots\\
M_{v+j-1,w} & \cdots & M_{v+j-1,w+k-1}\end{bmatrix}.
$$
For simplicity we write ${M}^{(v,w)}_{[k,k]}$ for ${M}^{(v,w)}_{[k]}$ and further ${M}_{[k]}$ for ${M}^{(1,1)}_{[k]}$. For some technical reasons, we also add that ${M}^{(v,w)}_{[0]}=I_{\mathcal R}$.

\begin{proposition}\label{FactorHessenberg} Let ${H}$ be an $m\times m$  block upper Hessenberg matrix with the $n$-by-$n$ block decomposition \eqref{HenssenbergMatrix}, where each element $M_{jk}\in \mathcal R^{s_j\times s_k}$ $(m=\sum_{j=0}^{n-1} s_j=\sum_{j=1}^{n} s_j)$. If $M_{kk}$ is invertible in the ring $\mathcal R^{s_k\times s_k}$, then   for any $j=1,\ldots,n$, $\left| H\right|$ can be factorized as
\begin{equation}\label{SmallHessenberg}
\left| H\right|=\sum_{l=0}^{j-1}\sum_{k=j}^n \sideset{_{l}}{}{\mathop{H}}\cdot M_{lk}\cdot H_k,
\end{equation}
where
$
\sideset{_{l}}{}{\mathop{H}}:=\begin{cases}
-I_{\mathcal R^{s_0\times s_0}},& l=0,\\
\left|{H}_{[l]}\right| \cdot M_{ll}^{-1}, & \mbox{others},
\end{cases}
$ and
$
H_k:=\begin{cases}
-I_{\mathcal R^{s_n\times s_n}}, & k=n,\\
M_{kk}^{-1} \cdot \left|{H}_{[n-k]}^{(k+1)}\right|, & \mbox{others,}\\
\end{cases}
$ or equivalently, in the  matrix form
\begin{align*}
\left|H\right|=&\left(-I_{\mathcal R^{s_0\times s_0}}, \left|{H}_{[1]}\right|,\cdots, \left|{H}_{[j-1]}\right|\right)
\begin{bmatrix}
I_{\mathcal R^{s_0\times s_0}} & & & \\
& M_{11} & & \\
& & \ddots & \\
& & & M_{j-1,j-1}
\end{bmatrix}^{-1}\\
&\cdot  {H}^{(1,j)}_{[j,n-j+1]} \cdot \begin{bmatrix}
 M_{jj} & & &\\
 & \ddots & &\\
 & & M_{n-1,n-1} &\\
 & & & & I_{\mathcal R^{s_n\times s_n}}\\
\end{bmatrix}^{-1}
\begin{bmatrix}
 \left|{H}^{(j+1)}_{[n-j]}\right|\\
 \vdots\\
 \left|{H}^{(n)}_{[1]}\right|\\
-I_{\mathcal R^{s_n\times s_n}}
\end{bmatrix}.
\end{align*}
\end{proposition}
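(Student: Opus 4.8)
The plan is to argue directly from the explicit representation \eqref{QuasiRepreElement}, reading each of its summands as a weighted \emph{path} $0=j_0<j_1<\cdots<j_p<j_{p+1}=n$ from the source row $0$ to the sink column $n$ through intermediate diagonal indices $j_1,\ldots,j_p\in\{1,\ldots,n-1\}$, with weight $(-1)^p M_{0,j_1}M_{j_1,j_1}^{-1}M_{j_1,j_2}\cdots M_{j_p,j_p}^{-1}M_{j_p,n}$ (the empty path $0\to n$ contributing $M_{0n}$). In this language $\left|H\right|$ is the sum of all path weights, and \eqref{SmallHessenberg} is the statement obtained by reorganizing this sum once a threshold $j$ is fixed.

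First I would note that, because the indices of every path increase strictly from $0$ to $n$ and $1\le j\le n$, there is a \emph{unique} consecutive pair $(j_i,j_{i+1})$ with $j_i<j\le j_{i+1}$. Writing $l:=j_i\in\{0,\ldots,j-1\}$ and $k:=j_{i+1}\in\{j,\ldots,n\}$, this is the only edge of the path crossing the threshold $j$, so the family of all paths is partitioned according to the crossing edge $(l,k)$. Each path then splits canonically into a left sub-path $0=j_0<\cdots<j_i=l$ with all indices $<j$, the crossing edge $l\to k$, and a right sub-path $k=j_{i+1}<\cdots<j_{p+1}=n$ with all indices $\ge j$.

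Next I would identify the two sub-path sums with smaller Hessenberg quasideterminants. Summing the weights of all left sub-paths with fixed sink $l\ge1$ reproduces, by Definition \ref{DefQuasi} applied to the leading block submatrix ${H}_{[l]}$ (rows $0,\ldots,l-1$, columns $1,\ldots,l$), exactly $\left|{H}_{[l]}\right|$, in which $l$ is the sink and carries no factor $M_{ll}^{-1}$; symmetrically, summing all right sub-paths with fixed source $k\le n-1$ reproduces $\left|{H}^{(k+1)}_{[n-k]}\right|$ for the trailing submatrix ${H}^{(k+1)}_{[n-k]}$ (rows $k,\ldots,n-1$, columns $k+1,\ldots,n$), in which $k$ is the source and carries no factor $M_{kk}^{-1}$. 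Reinserting the sink factor $M_{ll}^{-1}$ and the source factor $M_{kk}^{-1}$ that the crossing indices actually carry in the full weight turns these into $\sideset{_{l}}{}{\mathop{H}}=\left|{H}_{[l]}\right|M_{ll}^{-1}$ and $H_k=M_{kk}^{-1}\left|{H}^{(k+1)}_{[n-k]}\right|$, so the combined contribution of all paths crossing through $(l,k)$ equals $\sideset{_{l}}{}{\mathop{H}}\cdot M_{lk}\cdot H_k$; summing over $l\in\{0,\ldots,j-1\}$ and $k\in\{j,\ldots,n\}$ gives \eqref{SmallHessenberg}. The equivalent matrix form is then obtained mechanically by collecting these products into a block row vector, the inverse block-diagonal matrices $\mathrm{diag}(I,M_{11},\ldots,M_{j-1,j-1})$ and $\mathrm{diag}(M_{jj},\ldots,M_{n-1,n-1},I)$, the rectangular block ${H}^{(1,j)}_{[j,n-j+1]}=[M_{lk}]$, and a block column vector.

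The step demanding the most care, and the main obstacle, is the sign and boundary bookkeeping. I must verify that the sign $(-1)^p$ of a full path factors correctly: with $i-1$ interior indices on the left sub-path and $p-i-1$ on the right, the two sub-quasideterminants carry signs $(-1)^{i-1}$ and $(-1)^{p-i-1}$, whose product is $(-1)^{p-2}=(-1)^p$. The degenerate crossings must be reconciled with this rule: when $l=0$ the left sub-path is empty and one uses $\sideset{_{0}}{}{\mathop{H}}=-I_{\mathcal R^{s_0\times s_0}}$, while when $k=n$ the right sub-path is empty and one uses $H_n=-I_{\mathcal R^{s_n\times s_n}}$; in each case the prescribed $-I$ supplies exactly the factor of $(-1)$ that the absent sub-quasideterminant would have contributed (formally, it continues the sign $(-1)^{i-1}$, respectively $(-1)^{p-i-1}$, to the value $-1$ at the vanishing index), and the doubly degenerate term $l=0,k=n$ correctly returns the leading summand $M_{0n}$. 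Checking these three degenerate cases against the path-weight formula completes the argument.
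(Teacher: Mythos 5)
Your argument is correct, and it takes a genuinely different route from the paper. The paper proves \eqref{SmallHessenberg} by induction on $n$: it first reduces to the case $M_{kk}=I_{\mathcal R}$ via $H_\diamond$ and \eqref{QuasiRepreElement}, then applies the two-term factorization \eqref{HH1112} to peel off the first row and column, invokes the induction hypothesis on $\left|H_{(1;1)}\right|$ and $\left|H_{(2;1)}\right|$, and recombines. You instead take the explicit expansion \eqref{QuasiRepreElement} as the sole input and partition its summands, viewed as increasing paths from $0$ to $n$, according to the unique edge $(l,k)$ with $l<j\le k$ crossing the threshold $j$; the left and right sub-path sums are then recognized as $\left|{H}_{[l]}\right|$ and $\left|{H}^{(k+1)}_{[n-k]}\right|$, and the noncommutative product factors correctly because every left weight stays to the left of every right weight. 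Your sign check $(-1)^{i-1}(-1)^{p-i-1}=(-1)^p$ and the treatment of the three degenerate crossings ($l=0$, $k=n$, and both) are exactly the points that need verification, and they come out right. What your approach buys is a non-inductive, single-step proof that handles general invertible diagonal blocks directly (since \eqref{QuasiRepreElement} already carries the factors $M_{j_r,j_r}^{-1}$, no reduction to the identity-diagonal case is needed) and that makes the bilinear structure of \eqref{SmallHessenberg} transparent; the paper's induction, by contrast, leans only on the special case \eqref{HH1112} of the statement being proved and keeps the combinatorics implicit. Both proofs ultimately rest on the cited expansion \cite[Proposition 1.2.9]{GGRW}, so neither is more elementary in its inputs.
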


\begin{proof}
Our proof will be given via induction.
Case I: $M_{kk}=I_{\mathcal R}$ for all $k=1,\ldots, n$. \eqref{SmallHessenberg} is obviously  true  for $n=2$.
Suppose that this formula holds for $n\leq k$ as well. Now consider the case that $n=k+1$.
On grounds of the induction, one obtains that
\begin{multline}
\left|{H}_{(1;1)}\right|
=M_{1n}-\sum_{k=j}^{n-1} M_{1k} \cdot\left|{H}_{[n-k]}^{(k+1)}\right|
+\\
\sum_{l=2}^{j-1}\sum_{k=j}^{n-1} \left|\left({H}_{(1;1)}\right)_{[l-1]}\right|\cdot M_{lk}\cdot 
\left|{H}_{[n-k]}^{(k+1)}\right|
-
\sum_{l=2}^{j-1} \left|\left({H}_{(1;1)}\right)_{[l-1]}\right|\cdot M_{ln}, \label{H11}
\end{multline}
and
\begin{multline}
 \left|{H}_{(2;1)}\right|
= M_{0n}-\sum_{k=j}^{n-1} M_{0k} \cdot \left|{H}_{[n-k]}^{(k+1)}\right|+
\\
\sum_{l=2}^{j-1}\sum_{k=j}^{n-1}  \left|\left({H}_{(2;1)}\right)_{[l-1]}\right| \cdot M_{lk} \cdot \left|{H}_{[n-k]}^{(k+1)}\right|- \sum_{l=2}^{j-1}  \left|\left({H}_{(2;1)}\right)_{[l-1]}\right| \cdot M_{ln}.\label{H12}
\end{multline}

A combination of \eqref{HH1112}, \eqref{H11} and \eqref{H12} yields that
\begin{align}
&\left|H \right|
=-\sum_{k=j}^{n-1} M_{0k}\cdot \left|{H}_{[n-k]}^{(k+1)}\right|+M_{0n}+
\sum_{k=j}^{n-1} M_{01}\cdot M_{1k} \cdot\left|{H}_{[n-k]}^{(k+1)}\right|+ \nonumber\\
&\sum_{l=2}^{j-1}\sum_{k=j}^{n-1} \left|{H}_{[l-1]} \right|\cdot  M_{lk}\cdot \left|{H}_{[n-k]}^{(k+1)}\right|-
M_{01}\cdot M_{1n} -\sum_{l=2}^{j-1} \left|{H}_{[l-1]} \right|\cdot  M_{ln}.\nonumber
\end{align}

Case II: For the more general case that $M_{kk}\neq I_{\mathcal R}$ for some $k$, by \eqref{QuasiRepreElement} the calculation of $\left|H \right|$ can be reduced to $\left|{H}_{\diamond}\right|$ and the latter can be obtained in Case~I.

\end{proof}

\section{Inverse of block triangular matrices}
\label{Sec:Inverse}

Based on the above section, we are in a position to give an explicit formula for each block of the inverse of a noncommutative block triangular matrix.

\begin{theorem}\label{CorUnit}
Let ${M}$ be an $m\times m$ block triangular matrix with the $n$-by-$n$ block decomposition~\eqref{UpperMatrix}, where each block element $M_{jk}\in \mathcal R^{s_j\times s_k}$ $(m=\sum_{j=1}^{n} s_j)$. If each diagonal block $M_{kk}$ is invertible in the ring $\mathcal R^{s_k\times s_k}$, then ${M}$ is invertible  in the ring $\mathcal R^{m\times m}$. In this case, suppose that ${M}^{-1}=:
(M^{[-]}_{jk})_{j,k=1}^n$, where each $M^{[-]}_{jk}\in \mathcal R^{s_j\times s_k}$.
  Then
\begin{equation}\label{wtildeMjk}
M^{[-]}_{jk}=\begin{cases}
-M^{-1}_{jj}\cdot
\left|{M}^{(j,j+1)}_{[k-j]}\right|\cdot M^{-1}_{kk}, & j= 1,\ldots,k-1,\\
M^{-1}_{kk}, & j=k,\\
0^{j\times k}_{\mathcal R}, & j=k+1,\ldots,n.
\end{cases}
\end{equation}
 \end{theorem}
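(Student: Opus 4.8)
The plan is to exhibit the candidate matrix $\tilde M \coloneqq (\tilde M_{jk})_{j,k=1}^n$, where each $\tilde M_{jk}$ is defined to be the right-hand side of \eqref{wtildeMjk}, and to verify directly that $M\tilde M = I_{\mathcal R^{m\times m}}$. First I would record that $M$ is invertible: this is exactly the iterative block-triangular argument sketched in the introduction, valid because every diagonal block $M_{kk}$ is invertible. Since $M$ is invertible in $\mathcal R^{m\times m}$, any one-sided inverse already equals the unique two-sided inverse; indeed, from $M\tilde M = I$ we get $\tilde M = (M^{-1}M)\tilde M = M^{-1}(M\tilde M) = M^{-1}$. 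Thus it suffices to check the single identity $M\tilde M = I$, after which $M^{[-]}_{jk} = \tilde M_{jk}$ gives \eqref{wtildeMjk} (the case $j>k$ being subsumed, since $\tilde M_{jk}=0$ there).

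The key structural observation is that, for $1\le j<k\le n$, the submatrix $M^{(j,j+1)}_{[k-j]}$ — whose block rows are indexed $j,\ldots,k-1$ and whose block columns are indexed $j+1,\ldots,k$ — is a block upper Hessenberg matrix of the form \eqref{HenssenbergMatrix}. Indeed, since $M$ is upper triangular its $(p,q)$ block vanishes whenever $p>q$, so every entry of $M^{(j,j+1)}_{[k-j]}$ strictly below the subdiagonal is zero, while its subdiagonal blocks are precisely the diagonal blocks $M_{j+1,j+1},\ldots,M_{k-1,k-1}$ of $M$, which are invertible by hypothesis, and its top-right corner is $M_{jk}$. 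Hence the Hessenberg quasideterminant $\left|M^{(j,j+1)}_{[k-j]}\right|$ is well defined and the entry $\tilde M_{jk}$ makes sense.

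Next I would compute $M\tilde M$ block by block. As both factors are block upper triangular, so is the product, which disposes of the case $j>k$; and the diagonal blocks are $M_{jj}M_{jj}^{-1}=I$. For $j<k$, using $M_{jl}=0$ for $l<j$ and $\tilde M_{lk}=0$ for $l>k$, the sum $\sum_{l}M_{jl}\tilde M_{lk}$ collapses to
\[
(M\tilde M)_{jk} = \Bigl(-\left|M^{(j,j+1)}_{[k-j]}\right| + M_{jk} - \sum_{l=j+1}^{k-1} M_{jl} M_{ll}^{-1} \left|M^{(l,l+1)}_{[k-l]}\right|\Bigr) M_{kk}^{-1}.
\]
So the whole theorem reduces to the single recurrence
\[
\left|M^{(j,j+1)}_{[k-j]}\right| = M_{jk} - \sum_{l=j+1}^{k-1} M_{jl} M_{ll}^{-1} \left|M^{(l,l+1)}_{[k-l]}\right|,
\]
which is exactly the ``first block-row'' expansion of a Hessenberg quasideterminant. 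I would obtain it either from the elementwise expansion \eqref{QuasiRepreElement} — translating the global indices $j,\ldots,k$ into the local indices $0,\ldots,k-j$ of \eqref{HenssenbergMatrix} and collecting the terms whose leading factor is $M_{jl}$ — or, more cleanly, as the special case $j=1$ of the factorization in Proposition \ref{FactorHessenberg} applied to $H = M^{(j,j+1)}_{[k-j]}$.

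I expect the main obstacle to be purely the bookkeeping: carefully matching the global block indices of $M^{(j,j+1)}_{[k-j]}$ with the local Hessenberg indexing $0,\ldots,N-1$ and $1,\ldots,N$ of Definition \ref{DefQuasi} and Proposition \ref{FactorHessenberg}, and confirming that the trailing submatrices produced by the factorization are exactly the $M^{(l,l+1)}_{[k-l]}$ (with the corresponding invertible diagonal blocks $M_{ll}$). Once this identification is secured, the recurrence, and hence $M\tilde M = I$, follow at once. If one instead prefers to verify $\tilde M M = I$, the same scheme works verbatim using the dual ``last block-column'' expansion, i.e.\ the case $j=n$ of Proposition \ref{FactorHessenberg}.
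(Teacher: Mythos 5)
Your proposal is correct, and its core — defining the candidate inverse by \eqref{wtildeMjk}, multiplying block by block, and reducing the vanishing of the strictly upper blocks of the product to the first-block-row recurrence for Hessenberg quasideterminants — is the same computation the paper performs. Two points differ in the logical skeleton. First, the paper does not import invertibility from the iterative argument of the introduction: it verifies \emph{both} $M M^{[-]}=I_{\mathcal R^{m\times m}}$ and $M^{[-]}M=I_{\mathcal R^{m\times m}}$ (the latter by exactly the dual ``last block-column'' expansion you mention at the end), so that invertibility of $M$ is obtained as part of the proof rather than assumed. Your shortcut --- establish invertibility separately, then use that a one-sided inverse of an invertible element is the two-sided inverse --- is legitimate, but since invertibility is part of the theorem's conclusion you would need to write out the induction on the $2$-by-$2$ block splitting that the introduction only sketches; the paper's two-sided check makes the theorem self-contained. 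Second, the closing step is justified differently: the paper packages the sum
\begin{equation*}
M_{kj}-\sum_{t=0}^{j-k-1} M_{k,k+t}\,\bigl|{M}^{(k+t,k+t+1)}_{[j-k-t]}\bigr|
\end{equation*}
as the quasideterminant of an auxiliary Hessenberg matrix $N$ with a repeated block row (using Proposition \ref{FactorHessenberg}) and then invokes the Gelfand--Retakh proposition that such a quasideterminant vanishes, whereas you obtain the same identity by regrouping the expansion \eqref{QuasiRepreElement} by the first index, or equivalently by the $j=1$ case of Proposition \ref{FactorHessenberg}. Your route is slightly more self-contained (no appeal to the repeated-row vanishing result) and, once the local-versus-global index bookkeeping you flag is carried out, equally rigorous; the paper also first normalizes to $M_{kk}=I_{\mathcal R^{s_k\times s_k}}$ via \eqref{QuasiRepreElement}, a reduction you do not need but which shortens the index work.
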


\begin{proof} We only consider the case that $M_{kk}=I_{\mathcal R^{s_k\times s_k}}$ for $k=1,\ldots,n$;
more general case can be transformed to the above case with the help of the multiplication ${M}\cdot {\rm diag}\left(M^{-1}_{11}, \cdots, M^{-1}_{nn}\right)$ and the equation \eqref{QuasiRepreElement}.

 Let ${M}^{[-]}:=(M^{[-]}_{jk})_{j,k=1}^n$, where $M^{[-]}_{jk}$ is as in \eqref{wtildeMjk}. 
For $j,k=1,\ldots,n$, apparently we have
\begin{equation}\label{MM-}
\mbox{row}_k({M})\cdot \mbox{col}_j({M}^{[-]})=\begin{cases} I_{\mathcal R}, & k=j.\\
0_{\mathcal R}, & k=j+1,\ldots,n.
\end{cases}
\end{equation}
and 
\begin{equation}\label{M-M}
\mbox{row}_k({M}^{[-]})\cdot \mbox{col}_j({M})=\begin{cases} I_{\mathcal R}, & k=j,\\
0_{\mathcal R}, & k=j+1,\ldots,n.
\end{cases}
\end{equation}

Now consider the case that $k<j$. By assuming that
\begin{equation*}
{N}:=\begin{bmatrix}
I_{\mathcal R^{k\times k}} & M_{k,k+1}  & \cdots & M_{k,j-1} & M_{kj}\\
I_{\mathcal R^{k\times k}} & M_{k,k+1}  & \cdots & M_{k,j-1} & M_{kj}\\
 & I_{\mathcal R^{(k+1)\times (k+1)}} & \cdots & M_{k+1,j-1} & M_{k+1,j}\\
 & & \ddots &\vdots & \vdots\\
 & & & I_{\mathcal R^{(j-1)\times (j-1)}}  & M_{j-1,j}
\end{bmatrix}\in \mathcal R^{(\sum_{l=k}^{j} s_l) \times (s_k+\sum_{l=k}^{j-1} s_l)},
\end{equation*}
one derives that
\begin{equation}\label{MM-2}
\mbox{row}_k({M})\cdot \mbox{col}_j({M}^{[-]})
=M_{k,j}-\sum_{t=0}^{j-k-1}  M_{k,k+t} \cdot \left|{M}^{(k+t,k+t+1)}_{[j-k-t]}\right|
 =\left|N \right|=0^{s_k\times s_j}_{\mathcal R},
\end{equation}
where the second equality is due to Proposition \ref{FactorHessenberg} and the last one is due to \cite[Proposition, Subsection 1.3, P. 93]{GR}.
It follows from \eqref{MM-} and \eqref{MM-2} that ${M}\cdot {M}^{[-]}=I_{\mathcal R^{m\times m}}$ and
 ${M}$ is right invertible.

Analogously, a combination of Proposition \ref{FactorHessenberg} and  \cite[Proposition, Subsection 1.3, P. 93]{GR} yields that
\begin{equation}\label{M-M2}
\mbox{row}_k({M}^{[-]})\cdot \mbox{col}_j({M})
=M_{kj}-\sum_{t=1}^{j-k} \left|{M}^{(k,k+1)}_{[t]}\right| M_{k+t,j}=\left|\wtilde {N}\right|=0^{s_k\times s_j}_{\mathcal R},
\end{equation}
where
\begin{equation*}
\wtilde {N}:=\begin{bmatrix}
 M_{k,k+1} & \cdots & M_{k,j-1} & M_{kj} &M_{kj} \\
 I_{\mathcal R^{(k+1)\times (k+1)}} & \cdots & M_{k+1,j-1} &  M_{k+1,j}& M_{k+1,j} \\
  & \ddots &\vdots  & \vdots& \vdots\\
  &&I_{\mathcal R^{(j-1)\times (j-1)}} & M_{j-1,j}  & M_{j-1,j}\\
    & &  & I_{\mathcal R^{j\times j}}  & I_{\mathcal R^{j\times j}}
\end{bmatrix}\in \mathcal R^{^{(s_j+\sum_{l=k+1}^{j} s_l) \times (\sum_{l=k}^{j} s_l)}}.
\end{equation*}
By \eqref{M-M} and \eqref{M-M2} one can see that ${M}^{[-]}\cdot {M}=I_{\mathcal R^{m\times m}}$ and
 ${M}$ is left invertible.

Consequently ${M}$ is invertible and ${M}^{ -1}={M}^{[-]}$.

\end{proof}

On grounds of Theorem \ref{CorUnit}, it becomes transparent  that each $(j,k)$-th block element of ${M}^{-1}$ is uniquely determined by all the $\frac12(k-j+1)(k-j+2)$ blocks of the submatrix 
$$
 \begin{bmatrix}
M_{jj} &  \cdots& M_{jk}\\
&  \ddots&\vdots \\
& & M_{kk}
\end{bmatrix}.
$$
Although the connection between blocks of ${M}$ and its inverse is now established,  a further question might  arise: What is the relationship among blocks of the inverse ${M}^{-1}$? The answer is revealed in the following theorem where one can find a different type of recursive expressions for each block of ${M}^{-1}$.
\begin{theorem}\label{RecursiveInverse}
Under the same assumption as in Theorem \ref{CorUnit} and $j$, $k=1,\ldots,n$ with $j\leq k$,  $M_{jk}^{[-]}$ can be recursively derived via, for any $l=1,\ldots,k-j$,
\begin{equation*}
M_{jk}^{[-]}=-
\sum_{p=0}^{l-1} \sum_{q=l}^{k-j} M_{j,p+j}^{[-]} M_{j+p,j+q} M_{j+q,k}^{[-]}, 
\end{equation*}
or equivalently, in the matrix form
\begin{equation*}
M_{jk}^{[-]}=-\left[M_{jj}^{[-]}, \cdots, M_{j,j+l-1}^{[-]} \right]\cdot M_{[l,k-j-l+1]}^{(j,j+l)} \cdot
\begin{bmatrix}
M_{j+l,k}^{[-]}\\
\vdots\\
M_{kk}^{[-]}
\end{bmatrix}.
\end{equation*}
\end{theorem}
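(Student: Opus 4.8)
The plan is to read the recurrence straight off the explicit quasideterminant formula \eqref{wtildeMjk} of Theorem \ref{CorUnit} by feeding it into the factorization of Proposition \ref{FactorHessenberg}. The starting point is that, for $j<k$, formula \eqref{wtildeMjk} identifies the single off-diagonal block as
\[
M_{jj}\,M^{[-]}_{jk}\,M_{kk}=-\left|{M}^{(j,j+1)}_{[k-j]}\right|,
\]
so every block of $M^{-1}$ is, up to the diagonal factors $M_{jj}^{-1}$ and $M_{kk}^{-1}$, a single Hessenberg quasideterminant of a block of $M$. Since Theorem \ref{RecursiveInverse} is stated in exactly the ``row vector $\times$ diagonal $\times$ off-diagonal block $\times$ diagonal $\times$ column vector'' shape of Proposition \ref{FactorHessenberg}, the whole proof should reduce to applying that proposition to this one quasideterminant and then restoring the outer factors $-M_{jj}^{-1}(\,\cdot\,)M_{kk}^{-1}$.

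Concretely, I would set $H\coloneqq{M}^{(j,j+1)}_{[k-j]}$, view it as a block upper Hessenberg matrix with $N\coloneqq k-j$ block rows and columns, and apply \eqref{SmallHessenberg} with cut index $l$. Its middle factor is the sub-block of $H$ occupying the first $l$ rows and last $N-l+1$ columns, which is exactly ${M}^{(j,j+l)}_{[l,k-j-l+1]}$; its left factor is the row vector of leading quasideterminants $\bigl(-I,\left|{M}^{(j,j+1)}_{[1]}\right|,\dots,\left|{M}^{(j,j+1)}_{[l-1]}\right|\bigr)$ scaled by $\mathrm{diag}(I,M_{j+1,j+1},\dots,M_{j+l-1,j+l-1})^{-1}$, and its right factor is the corresponding column of trailing quasideterminants $\left|{M}^{(j+q,j+q+1)}_{[k-j-q]}\right|$. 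Left-multiplying $M^{[-]}_{jk}=-M_{jj}^{-1}\left|H\right|M_{kk}^{-1}$ by $-M_{jj}^{-1}$ and right-multiplying by $M_{kk}^{-1}$, and invoking \eqref{wtildeMjk} entry by entry, identifies the left factor with $\bigl[M^{[-]}_{jj},\dots,M^{[-]}_{j,j+l-1}\bigr]$ and the right factor with $-\bigl[M^{[-]}_{j+l,k},\dots,M^{[-]}_{kk}\bigr]^{\top}$; the resulting product is precisely the matrix form of Theorem \ref{RecursiveInverse}, and expanding it gives the double sum. Because Proposition \ref{FactorHessenberg} already accommodates arbitrary invertible $M_{kk}$, no separate normalisation is needed, though one could equally first reduce to $M_{kk}=I$ through \eqref{QuasiRepreElement} as in the proof of Theorem \ref{CorUnit}.

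The step I expect to cost the most care is the index bookkeeping rather than any genuine algebra. One must check that the abstract sub-quasideterminants produced by Proposition \ref{FactorHessenberg} for $H$ really coincide with the blocks $M^{[-]}_{j,j+p}$ and $M^{[-]}_{j+q,k}$ of $M^{-1}$; this amounts to tracking the one-column shift between the row labels $j,\dots,k-1$ and the column labels $j+1,\dots,k$ of $H$, so that the leading blocks $H_{[p]}$ and the trailing blocks $H^{(q+1)}_{[k-j-q]}$ are matched to ${M}^{(j,j+1)}_{[p]}$ and ${M}^{(j+q,j+q+1)}_{[k-j-q]}$. The two extreme summands also need separate verification: the $p=0$ term comes from the leading $-I$ in the row vector and yields $M^{[-]}_{jj}=M_{jj}^{-1}$, while the $q=k-j$ term comes from the trailing $-I$ in the column vector and yields $M^{[-]}_{kk}=M_{kk}^{-1}$. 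As a conceptual check on the signs and on the $l$-independence of the right-hand side, it is worth noting that in the unipotent case $M=I+U$ formula \eqref{QuasiRepreElement} expresses $M^{[-]}_{jk}$ as the signed sum over increasing chains from $j$ to $k$; splitting each chain at the unique edge crossing the threshold $j+l$ produces exactly the claimed double sum, the low part assembling to $M^{[-]}_{j,j+p}$, the crossing edge contributing $M_{j+p,j+q}$, and the high part assembling to $M^{[-]}_{j+q,k}$.
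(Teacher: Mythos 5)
Your proposal is correct and follows essentially the same route as the paper: both proofs reduce the claim to applying Proposition \ref{FactorHessenberg} to the Hessenberg quasideterminant $\left|{M}^{(j,j+1)}_{[k-j]}\right|$ and identifying, via Theorem \ref{CorUnit}, the leading and trailing sub-quasideterminants with the blocks $M^{[-]}_{j,j+p}$ and $M^{[-]}_{j+q,k}$ (the paper merely runs the computation in the opposite direction, substituting the known block expressions into the proposed double sum and recognizing the factorization). The index-shift and boundary-term checks you flag are exactly the case distinctions ($p=0$, $q=k-j$) worked out in the paper's proof.
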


\begin{proof}For $p=0,\ldots,l-1$ and  $q=l,\ldots,k-j$, suppose that ${M}_{p,q,j,k}$ is a $s_j\times s_k$ matrix over $\mathcal R$ given by
\begin{equation*}
{M}_{p,q,j,k}:=M_{j,p+j}^{[-]} M_{j+p,j+q} M_{j+q,k}^{[-]}.
\end{equation*}
According to Theorem \ref{CorUnit}, one deduces that 
\begin{align}
&M_{jj}{M}_{p,q,j,k}M_{kk}= \nonumber\\
&\begin{cases}
 M_{jk}, & p=0, q=k-j, \\
-M_{j,j+q}M_{j+q,j+q}^{-1} \left|{M}_{[k-j-q]}^{(q,q+1)}\right|,  & p=0, q=l,\ldots,k-j-1,\\
-\left|{M}_{[p]}^{(j,j+1)}\right|M_{j+p,j+p}^{-1} M_{j+p,k}, & p=1,\ldots,l-1, q=k-j, \\
\left|{M}_{[p]}^{(j,j+1)}\right|M_{j+p,j+p}^{-1} M_{j+p,j+q} M^{-1}_{j+q,j+q} 
\left|{M}_{[k-j-q]}^{(q,q+1)}\right|,  & \mbox{others}.
\end{cases} \nonumber
\end{align}
Coupled with Proposition \ref{FactorHessenberg}, the above equation reveals that  the summation of all $M_{jj}{M}_{p,q,j,k}M_{kk}$ is equal to a block Hessenberg quasideterminant $\left|{M}^{(j,j+1)}_{[k-j]}\right|$. Thus one can 
 complete the proof by using Theorem \ref{CorUnit} again.

\end{proof}

As an immediate consequence of Theorem \ref{RecursiveInverse}, at the end of this paper we give an off-diagonal  perturbation analysis for the inverse calculation: Let ${M}$ be a  block triangular matrix with the block decomposition~\eqref{UpperMatrix}.
 For any $l=1,\ldots,n-1$, $M$ can be decomposed into the $2$-by-$2$ block form
$$
M=\left[ \begin{array}{c|c}
{M}_{[l]} & {M}_{[l,n-l]}^{(1,l+1)} \\ \hline
0 & {M}_{[n-l]}^{(l+1,l+1)}
\end{array}
\right].
$$
If one imposes a  perturbation matrix  $E$ on the  block submatrix $ {M}_{[l,n-l]}^{(1,l+1)}$ lying in the  off-diagonal rectangular part  of ${M}$, it is required to see how each block element of  ${M}^{-1}$ accordingly changes  under this perturbation.

\begin{corollary}\label{ThmPermutation}
Let ${M}$ be an $m\times m$  block triangular matrix with the $n$-by-$n$ block decomposition~\eqref{UpperMatrix}, where each $M_{jk}\in \mathcal R^{s_j\times s_k}$ $(m=\sum_{j=1}^{n} s_j)$. Given any $l=1,\ldots,n$ and a block matrix $E:=[E_{jk}]_{j,k=1}^{l,n-l}$, where each $E_{jk}\in \mathcal R^{s_{j}\times s_{l+k}}$, suppose that  $\wtilde M$ is an $m\times m$  matrix with the $2$-by-$2$ block form
$$
\wtilde M=\left[ \begin{array}{c|c}
{M}_{[l]} & {M}_{[l,n-l]}^{(1,l+1)}+E \\ \hline
0 & {M}_{[n-l]}^{(l+1,l+1)}
\end{array}
\right].
$$
Assume that ${M}^{-1}=:[M^{[-]}_{jk}]_{j,k=1}^n$ and $\wtilde {M}^{-1}=:[\wtilde M^{[-]}_{jk}]_{j,k=1}^n$, where each  $M^{[-]}_{jk}\in \mathcal R^{s_j\times s_k}$ and $\wtilde M^{[-]}_{jk}\in \mathcal R^{s_j\times s_k}$, respectively. Then
\begin{equation*}
\wtilde M^{[-]}_{jk}-M^{[-]}_{jk}=\begin{cases}
-\left[M_{jj}^{[-]}, \cdots, M_{jl}^{[-]} \right]\cdot E_{[l-j+1,k-l]}^{(j,1)} \cdot
\begin{bmatrix}
M_{l+1,k}^{[-]}\\
\vdots\\
M_{kk}^{[-]}
\end{bmatrix}, & j=1,\ldots,l,\\
&  k=l+1,\ldots,n,\\
0_{\mathcal R}, & \mbox{others}.
\end{cases}
\end{equation*}

\end{corollary}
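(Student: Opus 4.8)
The plan is to deduce the corollary directly from the matrix form of Theorem~\ref{RecursiveInverse}, using as the only nontrivial input the locality of the inverse blocks furnished by Theorem~\ref{CorUnit}. Write $A:=M_{[l]}$ and $D:=M_{[n-l]}^{(l+1,l+1)}$ for the two diagonal corners of the $2$-by-$2$ form displayed in the statement; the perturbation alters only the off-diagonal rectangle and leaves both $A$ and $D$ intact. I would first record the locality principle: by \eqref{wtildeMjk}, the block $M_{jk}^{[-]}$ is built solely from the diagonal inverses $M_{jj}^{-1},M_{kk}^{-1}$ and the Hessenberg quasideterminant $\left|M_{[k-j]}^{(j,j+1)}\right|$, hence it depends only on those blocks $M_{ab}$ of $M$ with $j\le a,b\le k$. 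Consequently, when $j,k\le l$ all of these blocks lie inside $A$, when $j,k\ge l+1$ they all lie inside $D$, and in either situation $\wtilde M_{jk}^{[-]}=M_{jk}^{[-]}$; when $k<j$ the block is strictly below the diagonal and vanishes for both matrices. This establishes $\wtilde M_{jk}^{[-]}-M_{jk}^{[-]}=0_{\mathcal R}$ in every case outside the range $1\le j\le l<k\le n$.

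For the remaining range $1\le j\le l<k\le n$, I would apply the matrix form of Theorem~\ref{RecursiveInverse} with the split index $l-j+1$, which is admissible because $1\le l-j+1\le k-j$. This choice makes the leading row vector equal to $\left(M_{jj}^{[-]},\cdots,M_{jl}^{[-]}\right)$, the trailing column vector equal to $\left(M_{l+1,k}^{[-]},\cdots,M_{kk}^{[-]}\right)^{\top}$, and the middle factor equal to $M_{[l-j+1,\,k-l]}^{(j,l+1)}$, namely the very block rectangle $M_{ab}$ with $j\le a\le l<b\le k$ sitting in the off-diagonal part of $M$. The same expansion applied to $\wtilde M$ produces the identical row and column vectors but with the middle factor replaced by $\wtilde M_{[l-j+1,\,k-l]}^{(j,l+1)}=M_{[l-j+1,\,k-l]}^{(j,l+1)}+E_{[l-j+1,\,k-l]}^{(j,1)}$, the last equality recording that the off-diagonal rectangle of $\wtilde M$ is the original one shifted by $E$ under the index correspondence $E_{p,r}\leftrightarrow M_{p,l+r}$.

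It then remains to note that the surrounding row and column vectors are genuinely the same for $M$ and $\wtilde M$: their blocks are indexed by columns $j,\dots,l$ and rows $l+1,\dots,k$ respectively, so by the locality principle of the first paragraph they live entirely in the untouched corners $A$ and $D$ and therefore satisfy $\wtilde M_{jp}^{[-]}=M_{jp}^{[-]}$ and $\wtilde M_{qk}^{[-]}=M_{qk}^{[-]}$. Subtracting the expansion for $M_{jk}^{[-]}$ from that for $\wtilde M_{jk}^{[-]}$, the contribution of the common factor $M_{[l-j+1,\,k-l]}^{(j,l+1)}$ cancels and only the $E$-term survives, which is exactly the claimed formula. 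I expect the single delicate point to be the index bookkeeping in the second paragraph: one must check that the admissible split $l-j+1$ places the middle factor precisely over the perturbed rectangle and that the flanking vectors draw only on the unaffected corners. Once the shift $E_{p,r}\leftrightarrow M_{p,l+r}$ is verified to align the two submatrices, the conclusion follows by a clean cancellation rather than any substantial computation.
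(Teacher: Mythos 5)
Your proof is correct and follows exactly the route the paper intends: the paper states this corollary as an ``immediate consequence'' of Theorem~\ref{RecursiveInverse} without writing out the details, and your argument --- locality of $M^{[-]}_{jk}$ from \eqref{wtildeMjk} to handle the zero cases and to fix the flanking vectors, then the matrix form of Theorem~\ref{RecursiveInverse} with split index $l-j+1$ applied to both $M$ and $\wtilde M$ followed by cancellation of the common middle factor --- is precisely the omitted derivation, with the index bookkeeping done correctly.
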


\section{An example}\label{Sec:Example}
Let $\mathcal Q$ be the skew-field of quaternions as:
$$
\mathcal Q\coloneqq  \left\{a+b \bi +c \bj +d \bk| a,b,c,d \in \mathbb R   \right\}
$$
where the imaginary units $\bi$, $\bj$ and $\bk$  satisfy the rules
$$
\bi^2=\bj^2=\bk^2=-1,\ \bi \bj=-\bj \bi=\bk, \  \bj \bk=-\bk \bj=\bi,\  \bk \bi=-\bi \bk=\bj.
$$
Let $M$ be a block upper triangular $8\times 8$ matrix over $\mathcal Q$ with the block decomposition
\begin{equation*}
 {M}\coloneqq \begin{bmatrix}
 M_{11} & M_{12} & M_{13} & M_{14} & M_{15} \\
  & M_{22} & M_{23} & M_{24} & M_{25}\\
  & & M_{33}  & M_{34} & M_{35}\\
  & &  & M_{44}  & M_{45}\\
  & &  & & M_{55}
 \end{bmatrix},
 \end{equation*}
 where $$
\begin{aligned}
&M_{11}\coloneqq \bi+\bk, \  M_{12}\coloneqq [1+\bi-2\bj+\bk, 2-\bi],\ M_{13}\coloneqq 2-3\bi+4\bk,\\ 
&M_{14}\coloneqq [3+2\bi-5\bj-\bk, 2-\bj, 4], \ M_{15}\coloneqq 2+\bk,  \\
&M_{22}\coloneqq \begin{bmatrix}
\bi & \bj\\
1 & \bk
\end{bmatrix},\  M_{23}\coloneqq \begin{bmatrix}
1+\bk \\
3-\bi+\bj 
\end{bmatrix},\\
&  M_{24}\coloneqq \begin{bmatrix}
\bi-\bj & 5+\bi-\bk & 4-\bi \\
4+2\bj+\bk & 4 & 2-3\bi+\bj+2\bk
\end{bmatrix},\\
&M_{25}\coloneqq \begin{bmatrix}
3+\bj  \\
1+\bj-\bk 
\end{bmatrix},
\ M_{33}\coloneqq 2+\bi-\bk,\\
&M_{34}\coloneqq [-\bj+2\bk, 3\bi-\bj+2\bk, 6-4\bk],\ M_{35}\coloneqq 1+\bi,\\
&  M_{44}\coloneqq \begin{bmatrix}
1 & 0 & \bi\\
1-\bk & \bj & 0\\
\bk & 0 &0
\end{bmatrix}, 
\  M_{45}\coloneqq \begin{bmatrix}
2+\bj\\
1-\bi \\
5\bk 
\end{bmatrix}, 
\  M_{55}\coloneqq 1+2\bi+\bj-\bk.
\end{aligned}
$$

Decomposite the inverse $M^{-1}$ of $M$ into  $M^{-1}=:[M^{[-]}_{jk}]_{j,k=1}^5$,
where each block $M^{[-]}_{jk}$ has the same size as 
$M_{jk}$. The theoretical results in this work can help us to answer the following question:
\begin{itemize}
\item What is the value of $M^{[-]}_{15}$?
\end{itemize}
Moreover,  suppose that by perturbation $M$ becomes
\begin{equation*}
 \wtilde {M}\coloneqq \begin{bmatrix}
 M_{11} & M_{12} & M_{13} & M_{14}+E_{14} & M_{15}+E_{15} \\
  & M_{22} & M_{23} & M_{24}+E_{24} & M_{25}+E_{25}\\
  & & M_{33}  &  M_{34}+E_{34} & M_{35}+E_{35}\\
  & &  & M_{44}  & M_{45}\\
  & &  & & M_{55}
 \end{bmatrix},
 \end{equation*}
 where 
$$
\begin{aligned}
&E_{14}\coloneqq [\bi, \bi, \bi], \ E_{15}\coloneqq \bi,  \\
& E_{24}\coloneqq \begin{bmatrix}
\bj & \bj & \bj \\
\bi+\bj+\bk & \bi+\bj+\bk & \bi+\bj+\bk
\end{bmatrix},\ E_{25}\coloneqq \begin{bmatrix}
\bj  \\
\bi+\bj+\bk 
\end{bmatrix},\\
&   E_{34}\coloneqq [\bk, \bk, \bk],\ E_{35}\coloneqq \bk.
\end{aligned}
$$
Decomposite the inverse $\wtilde M^{[-]}$ of $\wtilde M$ into  $\wtilde M^{-1}=:[\wtilde M^{[-]}_{jk}]_{j,k=1}^5$. A further question is:
\begin{itemize}
\item
What is the absolute value $
\left|\wtilde M^{[-]}_{15}-M^{[-]}_{15}\right|
$
  and the relative value $\frac{
\left|\wtilde M^{[-]}_{15}-M^{[-]}_{15}\right|}{\left|M^{[-]}_{15}\right|}$
?
\end{itemize}

Our answer is given as follows: We first compute the diagonal blocks of $M^{-1}$:
$$
\begin{aligned}
&M_{11}^{[-]}=M_{11}^{-1}=-\frac12 (\bi+\bk),\  M_{22}^{[-]}=M_{22}^{-1}=\frac12\begin{bmatrix}
-\bi & 1\\
-\bj & -\bk
\end{bmatrix}, \\
&M_{33}^{[-]}=M_{33}^{-1}=\frac16 (2-\bi+\bk), \\
&  M_{44}^{[-]}=M_{44}^{-1}=\begin{bmatrix}
0 &  0 & -\bk\\
0 & -\bj & -(\bi+\bj)\\
-\bi & 0 & \bj
\end{bmatrix},
\\
& M_{55}^{[-]}=M_{55}^{-1}=\frac17 (1-2\bi-\bj+\bk).
\end{aligned}
$$
It follows that 
$$
\begin{aligned}
&M_{12}^{[-]}=-M_{11}^{[-]}M_{12}M_{22}^{[-]}=\frac12[1+2\bi-\bk,2\bi+\bj-\bk],\\
&M_{45}^{[-]}=-M_{44}^{[-]}M_{45}M_{55}^{[-]}=\frac17\begin{bmatrix}
-5+10\bi+5\bj-5\bk\\
5+2\bi-11\bj-12\bk\\
-7-2\bi+\bj+4\bk
\end{bmatrix},
\end{aligned}
$$
and then, by Theorem \ref{RecursiveInverse},
$$
M_{13}^{[-]}=-[M_{11}^{[-]},M_{12}^{[-]}] 
\begin{bmatrix}
M_{13}\\
M_{23}
\end{bmatrix}
 M_{33}^{[-]}
=-\frac1{12} (17+19\bi +13\bj+9\bk).$$
Due to Theorem \ref{RecursiveInverse} again,
$$
\begin{aligned}
M_{15}^{[-]}=&-[M_{11}^{[-]},M_{12}^{[-]},M_{13}^{[-]}]\begin{bmatrix}
M_{14} & M_{15}\\
M_{24} & M_{25}\\
M_{34} & M_{35}
\end{bmatrix}
\begin{bmatrix}
M_{45}^{[-]}\\
M_{55}^{[-]}
\end{bmatrix}\\
=&-\frac{1033}{84}+\frac{1051}{84} \bi+\frac{193}{84} \bj+ \frac{701}{28} \bk.
\end{aligned}
$$
Using Corollary \ref{ThmPermutation}, we have
$$
\begin{aligned}
\wtilde M^{[-]}_{15}-M^{[-]}_{15}=&-[M_{11}^{[-]},M_{12}^{[-]},M_{13}^{[-]}]\begin{bmatrix}
E_{14} & E_{15}\\
E_{24} & E_{25}\\
E_{34} & E_{35}
\end{bmatrix}
\begin{bmatrix}
M_{45}^{[-]}\\
M_{55}^{[-]}
\end{bmatrix}\\
=&\frac{10}{21}+\frac{1}{7} \bi- \frac{11}{21} \bj+ \frac{20}{21} \bk.
\end{aligned}
$$
Therefore,
the absolute value $
\left|\wtilde M^{[-]}_{15}-M^{[-]}_{15}\right|
$
  and the relative value $\frac{
\left|\wtilde M^{[-]}_{15}-M^{[-]}_{15}\right|}{\left|M^{[-]}_{15}\right|}
$ are approximately $1.1953$ and $0.0389$, respectively.

\section*{Acknowledgement}
The author is supported by the Foundation for Fostering Research of Young Teachers in South China Normal  University (Grant No. 19KJ20).

\end{document}